\newtheorem{theorem}{Theorem}
\theoremstyle{plain}
\newtheorem{corollary}{Corollary}
\newtheorem{lemma}{Lemma}
\numberwithin{equation}{section}
\begin{document}
\title{Inequalities via $n$-times differentiable $quasi-$convex functions}
\author{Merve Avci Ardic}
\address{Adiyaman University, Faculty of Science and Arts, Department of
Mathematics, 02040, Adiyaman}
\email{mavci@adiyaman.edu.tr}
\subjclass{26D10; 26D15}
\keywords{Hermite-Hadamard inequality, $quasi-$convex functions, H\"{o}lder
inequality, power-mean inequality.}

\begin{abstract}
In this paper, we establish some integral ineuqalities for $n-$ times
differentiable $quasi-$convex functions.
\end{abstract}

\maketitle

\section{introduction}

A function $\ f:[a,b]\rightarrow 
\mathbb{R}
$ is said to be $quasi-$convex function on $[a,b]$ if the inequality%
\begin{equation*}
f(\alpha x+(1-\alpha )y)\leq \max \left\{ f(x),f(y)\right\} 
\end{equation*}%
holds for all $x,y\in \lbrack a,b]$ and $\alpha \in \lbrack 0,1].$For some
results about $quasi-$convexity see \cite{16}-\cite{20}.

Hermite-Hadamard inequality is defined below:

Let $f:I\subset 
\mathbb{R}
\rightarrow 
\mathbb{R}
$ be a convex function on an interval $I$ and $a,b\in I$ with $a<b.$ Then 
\begin{equation*}
f\left( \frac{a+b}{2}\right) \leq \frac{1}{b-a}\int_{a}^{b}f(x)dx\leq \frac{%
f(a)+f(b)}{2}.
\end{equation*}
\ \ \ \ \ \ \ \ \ For some inequalities, generalizations and applications
concerning Hermite-Hadamard inequality see \cite{1}-\cite{6} and \cite{10}, 
\cite{11}.

Recently, in the literature there are so many manuscripts about $n-$times
differentiable functions on several kinds of convexities. In references \cite%
{6}-\cite{12}, readers can find some results about this issue.

Unlike the above-referenced studies, this paper aims to establish integral
inequalities for $n-$times differentiable $quasi-$convex functions.

To prove our main results, we use the following lemmas.

\begin{lemma}
\label{lem 1.1} \cite{21} Let $f:[a,b]\rightarrow 
\mathbb{R}
$ be a mapping such that the derivative $f^{(n-1)}$ $(n\geq 1)$ is
absolutely continuous on $[a,b]$. Then for any $x\in \lbrack a,b]$ one has
the equality:%
\begin{eqnarray*}
\int_{a}^{b}f(t)dt &=&\sum_{k=0}^{n-1}\frac{1}{\left( k+1\right) !}\left[
\left( x-a\right) ^{k+1}f^{(k)}(a)+\left( -1\right) ^{k}\left( b-x\right)
^{k+1}f^{(k)}(b)\right]  \\
&&+\frac{1}{n!}\int_{a}^{b}\left( x-t\right) ^{n}f^{(n)}(t)dt.
\end{eqnarray*}
\end{lemma}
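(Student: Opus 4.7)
The plan is to prove this identity by induction on $n$, with integration by parts serving as the single calculational engine at both the base case and the inductive step.

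For the base case $n=1$, I would start from the remainder term $\int_{a}^{b}(x-t)\,f'(t)\,dt$ and integrate by parts with $u=x-t$ and $dv=f'(t)\,dt$. The boundary contribution is $(x-b)f(b)-(x-a)f(a)$, and the remaining integral is $\int_{a}^{b}f(t)\,dt$. Rewriting $x-b=-(b-x)$ and rearranging yields the $n=1$ case
\begin{equation*}
\int_{a}^{b}f(t)\,dt=(x-a)f(a)+(b-x)f(b)+\int_{a}^{b}(x-t)f'(t)\,dt,
\end{equation*}
which matches the statement (the sum collapses to its single $k=0$ term).

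For the inductive step, I would assume the formula at level $n$ and then transform only the remainder $\frac{1}{n!}\int_{a}^{b}(x-t)^{n}f^{(n)}(t)\,dt$ by integration by parts, taking $u=f^{(n)}(t)$ and $dv=(x-t)^{n}dt$, so that $v=-\frac{(x-t)^{n+1}}{n+1}$ and $du=f^{(n+1)}(t)\,dt$. Two things should happen simultaneously: the boundary term produces
\begin{equation*}
\frac{(x-a)^{n+1}}{(n+1)!}f^{(n)}(a)+\frac{(-1)^{n}(b-x)^{n+1}}{(n+1)!}f^{(n)}(b),
\end{equation*}
using $(x-b)^{n+1}=(-1)^{n+1}(b-x)^{n+1}$, which is exactly the $k=n$ summand needed to extend the original sum from $k\le n-1$ to $k\le n$; and the new integral becomes $\frac{1}{(n+1)!}\int_{a}^{b}(x-t)^{n+1}f^{(n+1)}(t)\,dt$, which is the desired remainder at level $n+1$.

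I do not anticipate any genuine obstacle; the care needed is purely clerical, namely tracking signs (the $(-1)^{k}$ factor in the sum arises step by step from $(x-b)^{k+1}=(-1)^{k+1}(b-x)^{k+1}$) and factorials (the $\frac{1}{n+1}$ picked up from $v$ merges with $\frac{1}{n!}$ into $\frac{1}{(n+1)!}$). The absolute continuity hypothesis on $f^{(n-1)}$ (respectively $f^{(n)}$ at the inductive step) is precisely what legitimizes integration by parts in the Lebesgue sense, since an absolutely continuous function equals the integral of its derivative.
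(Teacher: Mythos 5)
Your proof is correct. Note that the paper itself offers no proof of this lemma; it is quoted from reference \cite{21} (Barnett and Dragomir), so there is no in-paper argument to compare against. Your induction on $n$, with a single integration by parts converting the level-$n$ remainder $\frac{1}{n!}\int_a^b (x-t)^n f^{(n)}(t)\,dt$ into the $k=n$ summand plus the level-$(n+1)$ remainder, is the standard derivation of this Taylor-type identity, and your bookkeeping of the sign $(x-b)^{k+1}=(-1)^{k+1}(b-x)^{k+1}$ and of the factorials is accurate. The only point worth making explicit is that the inductive hypothesis at level $n$ requires $f^{(n-1)}$ absolutely continuous, which is implied by the level-$(n+1)$ hypothesis that $f^{(n)}$ is absolutely continuous, so the induction is legitimately chained.
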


\begin{lemma}
\label{lem 1.2} \cite{10.} Let $f:I^{\circ }\subseteq 
\mathbb{R}
\rightarrow 
\mathbb{R}
,$ $a,b\in I^{\circ }$ with $a<b,$ $f^{(n)}$ exists on $I^{\circ }$ and $%
f^{(n)}\in L(a,b)$ for $n\geq 1.$ We have the following identity:\ 
\begin{eqnarray*}
&&\frac{f(a)+f(b)}{2}-\frac{1}{b-a}\int_{a}^{b}f(x)dx-\sum_{k=2}^{n-1}\frac{%
\left( k-1\right) \left( b-a\right) ^{k}}{2(k+1)!}f^{(k)}(a) \\
&=&\frac{\left( b-a\right) ^{n}}{2n!}\int_{0}^{1}t^{n-1}(n-2t)f^{(n)}(ta+%
\left( 1-t\right) b)dt.
\end{eqnarray*}
\end{lemma}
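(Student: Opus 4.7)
The plan is to prove the identity by induction on $n$, with integration by parts as the essential tool.

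For the base case $n=1$, the sum $\sum_{k=2}^{0}$ is empty, so the identity reduces to $\frac{f(a)+f(b)}{2}-\frac{1}{b-a}\int_a^b f(x)\,dx = \frac{b-a}{2}\int_0^1(1-2t)f'(ta+(1-t)b)\,dt$. I would verify this by a single integration by parts taking $u=1-2t$ and $dv=f'(ta+(1-t)b)\,dt$, so that $v=-\frac{1}{b-a}f(ta+(1-t)b)$; the boundary contributes $\frac{f(a)+f(b)}{b-a}$, and the remaining integral, after the substitution $x=ta+(1-t)b$, yields $\frac{2}{(b-a)^2}\int_a^b f(x)\,dx$, which combines correctly after multiplication by $\frac{b-a}{2}$.

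For the inductive step from $n-1$ to $n$ (with $n\geq 2$), I would integrate by parts the right-hand integral with $u=t^{n-1}(n-2t)$ and $dv=f^{(n)}(ta+(1-t)b)\,dt$, so that $du=n[(n-1)t^{n-2}-2t^{n-1}]\,dt$ and $v=-\frac{1}{b-a}f^{(n-1)}(ta+(1-t)b)$. Since $n\geq 2$ the boundary at $t=0$ vanishes, while $t=1$ contributes $-\frac{n-2}{b-a}f^{(n-1)}(a)$. After multiplication by $\frac{(b-a)^n}{2\,n!}$, this boundary term equals $-\frac{(n-2)(b-a)^{n-1}}{2\,n!}f^{(n-1)}(a)$, which is exactly the $k=n-1$ summand of $\sum_{k=2}^{n-1}\frac{(k-1)(b-a)^k}{2(k+1)!}f^{(k)}(a)$. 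The residual piece, using $\frac{n}{n!}=\frac{1}{(n-1)!}$, becomes $\frac{(b-a)^{n-1}}{2(n-1)!}\int_0^1 t^{n-2}[(n-1)-2t]f^{(n-1)}(ta+(1-t)b)\,dt$, which is precisely the right-hand side of the lemma at level $n-1$; applying the induction hypothesis to this expression completes the step.

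The main obstacle is algebraic rather than analytic: one must verify that the polynomial $n\,t^{n-2}[(n-1)-2t]$ emerging from $du$ reassembles into the shape $t^{m-1}[m-2t]$ at $m=n-1$ required by the induction hypothesis, and that the boundary coefficient $\frac{n-2}{n!}$ matches $\frac{k-1}{(k+1)!}$ at $k=n-1$. These index and factorial identifications are routine but require careful bookkeeping; once aligned, the induction closes cleanly.
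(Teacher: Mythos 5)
Your induction-with-integration-by-parts argument is correct: the base case, the boundary evaluation $u(1)v(1)=-\tfrac{n-2}{b-a}f^{(n-1)}(a)$ (with $u(0)=0$ for $n\geq 2$), the identification of $\tfrac{(n-2)(b-a)^{n-1}}{2\,n!}$ with the $k=n-1$ summand $\tfrac{(k-1)(b-a)^{k}}{2(k+1)!}\big|_{k=n-1}$, and the reassembly of the residual integral into the level-$(n-1)$ right-hand side all check out. Note that the paper itself gives no proof of this lemma (it is imported from reference [10.]); your derivation is the standard one for identities of this type, so there is nothing to contrast it with beyond observing that a single backward induction as you set it up is exactly how such a formula is obtained.
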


\section{inequalities for $n-$times differentiable $quasi-$convex functions
via Lemma \protect\ref{lem 1.1}}

The first result is the following theorem. 

\begin{theorem}
\label{teo 2.1} Let $f:[a,b]\rightarrow 
\mathbb{R}
$ be a mapping such that the derivative $f^{(n-1)}$ $(n\geq 1)$ is
absolutely continuous on $[a,b]$. If $\left\vert f^{(n)}\right\vert $ is $%
quasi-$convex on $[a,b]$ following inequality holds%
\begin{eqnarray*}
&&\left\vert \int_{a}^{b}f(t)dt-\sum_{k=0}^{n-1}\frac{1}{\left( k+1\right) !}%
\left[ \left( x-a\right) ^{k+1}f^{(k)}(a)+\left( -1\right) ^{k}\left(
b-x\right) ^{k+1}f^{(k)}(b)\right] \right\vert  \\
&\leq &\frac{1}{n!}\left\{ \left[ \max \left\{ \left\vert
f^{(n)}(a)\right\vert ,\left\vert f^{(n)}(x)\right\vert \right\} \right] 
\frac{\left( x-a\right) ^{n+1}}{n+1}\right.  \\
&&+\left. \left[ \max \left\{ \left\vert f^{(n)}(x)\right\vert ,\left\vert
f^{(n)}(b)\right\vert \right\} \right] \frac{\left( b-x\right) ^{n+1}}{n+1}%
\right\} 
\end{eqnarray*}%
for all $x\in \lbrack a,b]$.

\begin{proof}
From Lemma \ref{lem 1.1}, property of the modulus and $quasi$-convexity of $%
\left\vert f^{(n)}\right\vert ,$ we can write%
\begin{eqnarray*}
&&\left\vert \int_{a}^{b}f(t)dt-\sum_{k=0}^{n-1}\frac{1}{\left( k+1\right) !}%
\left[ \left( x-a\right) ^{k+1}f^{(k)}(a)+\left( -1\right) ^{k}\left(
b-x\right) ^{k+1}f^{(k)}(b)\right] \right\vert  \\
&\leq &\frac{1}{n!}\int_{a}^{b}\left\vert x-t\right\vert ^{n}\left\vert
f^{(n)}(t)\right\vert dt \\
&=&\frac{1}{n!}\left\{ \int_{a}^{x}\left( x-t\right) ^{n}\left\vert
f^{(n)}(t)\right\vert dt+\int_{x}^{b}\left( t-x\right) ^{n}\left\vert
f^{(n)}(t)\right\vert dt\right\}  \\
&=&\frac{1}{n!}\left\{ \int_{a}^{x}\left( x-t\right) ^{n}\left\vert
f^{(n)}\left( \frac{t-a}{x-a}x+\frac{x-t}{x-a}a\right) \right\vert dt\right. 
\\
&&\left. +\int_{x}^{b}\left( t-x\right) ^{n}\left\vert f^{(n)}\left( \frac{%
t-x}{b-x}b+\frac{b-t}{b-x}x\right) \right\vert dt\right\}  \\
&\leq &\frac{1}{n!}\left\{ \int_{a}^{x}\left( x-t\right) ^{n}\left[ \max
\left\{ \left\vert f^{(n)}(a)\right\vert ,\left\vert f^{(n)}(x)\right\vert
\right\} \right] \right.  \\
&&\left. +\int_{x}^{b}\left( t-x\right) ^{n}\left[ \max \left\{ \left\vert
f^{(n)}(x)\right\vert ,\left\vert f^{(n)}(b)\right\vert \right\} \right]
\right\} .
\end{eqnarray*}%
If we use the fact that%
\begin{equation*}
\int_{a}^{x}\left( x-t\right) ^{n}dt=\frac{\left( x-a\right) ^{n+1}}{n+1}
\end{equation*}%
and%
\begin{equation*}
\int_{x}^{b}\left( t-x\right) ^{n}dt=\frac{(b-x)^{n+1}}{(n+1)}
\end{equation*}%
we get the desired result.
\end{proof}
\end{theorem}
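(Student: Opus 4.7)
The plan is to reduce everything to Lemma \ref{lem 1.1}. Starting from that identity, moving the polynomial sum to the left and taking absolute values converts the claim into an upper bound for
\[
\frac{1}{n!}\left|\int_{a}^{b}(x-t)^{n}f^{(n)}(t)\,dt\right|.
\]
The triangle inequality for integrals then yields $\frac{1}{n!}\int_{a}^{b}|x-t|^{n}|f^{(n)}(t)|\,dt$, so the first step is simply to invoke Lemma \ref{lem 1.1} and this estimate.

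Next I would split the integral at $x$. On $[a,x]$ we have $|x-t|^{n}=(x-t)^{n}$ and on $[x,b]$ we have $|x-t|^{n}=(t-x)^{n}$, so the integral becomes
\[
\int_{a}^{x}(x-t)^{n}|f^{(n)}(t)|\,dt+\int_{x}^{b}(t-x)^{n}|f^{(n)}(t)|\,dt.
\]
The key trick, and the only genuinely nonroutine step, is to recognize each $t$ in these subintervals as a convex combination of the subinterval's endpoints, namely $t=\tfrac{t-a}{x-a}\,x+\tfrac{x-t}{x-a}\,a$ on $[a,x]$ and $t=\tfrac{t-x}{b-x}\,b+\tfrac{b-t}{b-x}\,x$ on $[x,b]$, with coefficients in $[0,1]$ summing to $1$. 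This lets me apply the quasi-convexity of $|f^{(n)}|$ to bound $|f^{(n)}(t)|$ by $\max\{|f^{(n)}(a)|,|f^{(n)}(x)|\}$ on the first piece and by $\max\{|f^{(n)}(x)|,|f^{(n)}(b)|\}$ on the second. These bounds are constants and come out of the integrals.

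Finally I would evaluate the two elementary integrals $\int_{a}^{x}(x-t)^{n}dt=\tfrac{(x-a)^{n+1}}{n+1}$ and $\int_{x}^{b}(t-x)^{n}dt=\tfrac{(b-x)^{n+1}}{n+1}$, which are already stated in the excerpt, and collect terms to match the right-hand side of the theorem. I expect the main (modest) obstacle to be presenting the convex-combination representation of $t$ cleanly so that the invocation of quasi-convexity is transparent; everything else is either a direct appeal to Lemma \ref{lem 1.1} or routine calculus.
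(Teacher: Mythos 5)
Your proposal is correct and follows exactly the same route as the paper's own proof: invoke Lemma \ref{lem 1.1}, pass to absolute values, split the integral at $x$, write $t$ as a convex combination of the endpoints of each subinterval to apply quasi-convexity of $\left\vert f^{(n)}\right\vert$, and finish with the two elementary integrals. No gaps.
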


\begin{corollary}
\label{co 1.1} In Theorem \ref{teo 2.1}, if $x=\frac{a+b}{2},$ following
inequality holds:%
\begin{eqnarray*}
&&\left\vert \int_{a}^{b}f(t)dt-\sum_{k=0}^{n-1}\frac{1}{\left( k+1\right) !}%
\left( \frac{b-a}{2}\right) ^{k+1}\left[ f^{(k)}(a)+\left( -1\right)
^{k}f^{(k)}(b)\right] \right\vert  \\
&\leq &\frac{(b-a)^{n+1}}{2^{n+1}(n+1)!}\left\{ \left[ \max \left\{
\left\vert f^{(n)}(a)\right\vert ,\left\vert f^{(n)}(x)\right\vert \right\} %
\right] +\left[ \max \left\{ \left\vert f^{(n)}(x)\right\vert ,\left\vert
f^{(n)}(b)\right\vert \right\} \right] \right\} .
\end{eqnarray*}
\end{corollary}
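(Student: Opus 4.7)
The plan is to obtain Corollary \ref{co 1.1} as a direct specialization of Theorem \ref{teo 2.1} by choosing $x=\frac{a+b}{2}$. The single observation that drives the entire calculation is that at this midpoint the two endpoint distances collapse to $x-a = b-x = \frac{b-a}{2}$, so every power of $(x-a)$ and $(b-x)$ appearing in the statement becomes a power of $\frac{b-a}{2}$.

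First I would rewrite the summation on the left-hand side. Since $(x-a)^{k+1} = (b-x)^{k+1} = \left(\frac{b-a}{2}\right)^{k+1}$ for every $k\in\{0,\dots,n-1\}$, this common factor pulls out of each term, so that
\[
(x-a)^{k+1} f^{(k)}(a) + (-1)^{k}(b-x)^{k+1} f^{(k)}(b) = \left(\tfrac{b-a}{2}\right)^{k+1}\!\bigl[f^{(k)}(a) + (-1)^{k} f^{(k)}(b)\bigr],
\]
which is exactly the form displayed inside the absolute value in the corollary.

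Next I would simplify the right-hand side. Using $(x-a)^{n+1} = (b-x)^{n+1} = \frac{(b-a)^{n+1}}{2^{n+1}}$, both maxima acquire the same coefficient, and combining it with the factor $\frac{1}{n!}$ and the identity $\frac{1}{(n+1)\, n!} = \frac{1}{(n+1)!}$ produces the claimed prefactor $\frac{(b-a)^{n+1}}{2^{n+1}(n+1)!}$ in front of the sum of the two maxima. No further estimation or convexity argument is needed.

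There is essentially no obstacle: the corollary is pure substitution, and the only piece of bookkeeping requiring any attention is the factorial simplification $\frac{1}{(n+1)\, n!} = \frac{1}{(n+1)!}$. The only minor cosmetic point is that the statement still retains $\lvert f^{(n)}(x)\rvert$ in the maxima rather than replacing it by $\lvert f^{(n)}(\frac{a+b}{2})\rvert$, but this requires no argument on our part.
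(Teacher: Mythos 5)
Your proof is correct and coincides with the paper's intent: the corollary is obtained by direct substitution of $x=\frac{a+b}{2}$ into Theorem \ref{teo 2.1}, using $x-a=b-x=\frac{b-a}{2}$ and $\frac{1}{(n+1)\,n!}=\frac{1}{(n+1)!}$, and the paper offers no separate proof because none is needed. Your remark that the maxima should really read $\left\vert f^{(n)}\left(\frac{a+b}{2}\right)\right\vert$ rather than $\left\vert f^{(n)}(x)\right\vert$ correctly identifies a notational slip in the paper's statement.
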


\begin{theorem}
\label{teo 2.2} Let $f:[a,b]\rightarrow 
\mathbb{R}
$ be a mapping such that the derivative $f^{(n-1)}$ $(n\geq 1)$ is
absolutely continuous on $[a,b]$. If $\left\vert f^{(n)}\right\vert ^{q}$ is 
$quasi-$convex on $[a,b],$ following inequality holds for all $x\in \lbrack
a,b]$%
\begin{eqnarray*}
&&\left\vert \int_{a}^{b}f(t)dt-\sum_{k=0}^{n-1}\frac{1}{\left( k+1\right) !}%
\left[ \left( x-a\right) ^{k+1}f^{(k)}(a)+\left( -1\right) ^{k}\left(
b-x\right) ^{k+1}f^{(k)}(b)\right] \right\vert  \\
&\leq &\frac{\left( b-a\right) ^{1/q}}{n!}\left( \frac{\left( x-a\right)
^{np+1}+\left( b-x\right) ^{np+1}}{np+1}\right) ^{1/p}\left[ \max \left\{
\left\vert f^{(n)}(a)\right\vert ^{q},\left\vert f^{(n)}(b)\right\vert
^{q}\right\} \right] ^{\frac{1}{q}}
\end{eqnarray*}%
where $p>1$ and $\frac{1}{p}+\frac{1}{q}=1.$
\end{theorem}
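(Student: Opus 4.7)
The plan is to argue in the spirit of Theorem \ref{teo 2.1}, but to introduce a H\"older split so as to accommodate the $q$-th power version of quasi-convexity. Starting from Lemma \ref{lem 1.1} and applying the triangle inequality for integrals, the left-hand side of the claimed inequality is bounded by
\begin{equation*}
\frac{1}{n!}\int_{a}^{b}|x-t|^{n}|f^{(n)}(t)|\,dt.
\end{equation*}
In contrast to the proof of Theorem \ref{teo 2.1}, I would keep this integral in one piece rather than splitting at $x$ immediately.

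Next I would apply H\"older's inequality with exponents $p$ and $q$ to decouple the weight $|x-t|^{n}$ from the derivative factor, obtaining
\begin{equation*}
\int_{a}^{b}|x-t|^{n}|f^{(n)}(t)|\,dt \leq \left( \int_{a}^{b}|x-t|^{np}\,dt\right)^{1/p}\left( \int_{a}^{b}|f^{(n)}(t)|^{q}\,dt\right)^{1/q}.
\end{equation*}

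For the first factor I would split at $t=x$ into $\int_{a}^{x}(x-t)^{np}\,dt$ and $\int_{x}^{b}(t-x)^{np}\,dt$, each of which integrates elementarily to yield
\begin{equation*}
\int_{a}^{b}|x-t|^{np}\,dt \;=\; \frac{(x-a)^{np+1}+(b-x)^{np+1}}{np+1}.
\end{equation*}
For the second factor I would exploit the fact that quasi-convexity of $|f^{(n)}|^{q}$ on $[a,b]$ yields the uniform pointwise estimate $|f^{(n)}(t)|^{q}\leq \max\{|f^{(n)}(a)|^{q},|f^{(n)}(b)|^{q}\}$ for every $t\in[a,b]$, since every such $t$ is a convex combination of $a$ and $b$. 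Integrating this bound over $[a,b]$ produces a factor $(b-a)\max\{|f^{(n)}(a)|^{q},|f^{(n)}(b)|^{q}\}$, whose $(1/q)$-th power is precisely the last factor in the claim. Combining the three ingredients with the prefactor $1/n!$ gives the stated estimate.

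I do not anticipate a serious obstacle: the proof is essentially an assembly of Lemma \ref{lem 1.1}, H\"older's inequality, and the global quasi-convex bound. The one conceptual point worth flagging is that, unlike in Theorem \ref{teo 2.1}, there is no need to split the domain of integration at $x$ when estimating $|f^{(n)}|$; once H\"older separates the weight $|x-t|^{n}$ from $|f^{(n)}(t)|^{q}$, quasi-convexity applies globally on $[a,b]$ with the single maximum $\max\{|f^{(n)}(a)|^{q},|f^{(n)}(b)|^{q}\}$, which is what explains the form of the right-hand side (no separate $|f^{(n)}(x)|$ terms appearing).
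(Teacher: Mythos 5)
Your proposal is correct and follows essentially the same route as the paper: Lemma \ref{lem 1.1}, H\"older's inequality with exponents $p$ and $q$ applied to the whole integral over $[a,b]$, the elementary evaluation of $\int_a^b|x-t|^{np}\,dt=\frac{(x-a)^{np+1}+(b-x)^{np+1}}{np+1}$, and a global bound on $|f^{(n)}|^q$ over $[a,b]$. If anything, your direct pointwise use of quasi-convexity, $|f^{(n)}(t)|^{q}\leq \max\{|f^{(n)}(a)|^{q},|f^{(n)}(b)|^{q}\}$, is cleaner than the paper's intermediate step, which first writes a weighted-average (convexity-type) bound before passing to the maximum.
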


\begin{proof}
From Lemma \ref{lem 1.1}, property of the modulus, well-known H\"{o}lder
integral inequality and $quasi-$convexity of $\left\vert f^{(n)}\right\vert
^{q},$ we can write%
\begin{eqnarray*}
&&\left\vert \int_{a}^{b}f(t)dt-\sum_{k=0}^{n-1}\frac{1}{\left( k+1\right) !}%
\left[ \left( x-a\right) ^{k+1}f^{(k)}(a)+\left( -1\right) ^{k}\left(
b-x\right) ^{k+1}f^{(k)}(b)\right] \right\vert  \\
&\leq &\frac{1}{n!}\left( \int_{a}^{b}\left\vert x-t\right\vert
^{np}dt\right) ^{1/p}\left( \int_{a}^{b}\left\vert f^{(n)}(t)\right\vert
^{q}dt\right) ^{1/q} \\
&=&\frac{1}{n!}\left( \int_{a}^{x}\left( x-t\right)
^{np}dt+\int_{x}^{b}\left( t-x\right) ^{np}dt\right) ^{1/p}\left(
\int_{a}^{b}\left\vert f^{(n)}\left( \frac{b-t}{b-a}a+\frac{t-a}{b-a}%
b\right) \right\vert ^{q}dt\right) ^{1/q} \\
&\leq &\frac{1}{n!}\left( \frac{\left( x-a\right) ^{np+1}+\left( b-x\right)
^{np+1}}{np+1}\right) ^{1/p}\left( \int_{a}^{b}\left[ \frac{b-t}{b-a}%
\left\vert f^{(n)}(a)\right\vert ^{q}+\frac{t-a}{b-a}\left\vert
f^{(n)}(b)\right\vert ^{q}\right] dt\right) ^{1/q} \\
&=&\frac{\left( b-a\right) ^{1/q}}{n!}\left( \frac{\left( x-a\right)
^{np+1}+\left( b-x\right) ^{np+1}}{np+1}\right) ^{1/p}\left[ \max \left\{
\left\vert f^{(n)}(a)\right\vert ^{q},\left\vert f^{(n)}(b)\right\vert
^{q}\right\} \right] ^{\frac{1}{q}}.
\end{eqnarray*}%
The proof is completed.
\end{proof}

\begin{theorem}
\label{teo 2.3} Let $f:[a,b]\rightarrow 
\mathbb{R}
$ be a mapping such that the derivative $f^{(n-1)}$ $(n\geq 1)$ is
absolutely continuous on $[a,b]$. If $\left\vert f^{(n)}\right\vert ^{q}$ is 
$quasi-$convex on $[a,b],$ following inequality holds for all $x\in \lbrack
a,b]$%
\begin{eqnarray*}
&&\left\vert \int_{a}^{b}f(t)dt-\sum_{k=0}^{n-1}\frac{1}{\left( k+1\right) !}%
\left[ \left( x-a\right) ^{k+1}f^{(k)}(a)+\left( -1\right) ^{k}\left(
b-x\right) ^{k+1}f^{(k)}(b)\right] \right\vert  \\
&\leq &\frac{1}{n!}\left( \frac{q-1}{nq-p+q-1}\right) ^{\frac{1}{p}} \\
&&\times \left\{ \left( \left( x-a\right) ^{\left( nq+q-p-1\right)
/(q-1)}\right) ^{\frac{1}{p}}\left( \frac{\left( x-a\right) ^{p+1}}{p+1}%
\right) ^{\frac{1}{q}}\left[ \max \left\{ \left\vert f^{(n)}(a)\right\vert
^{q},\left\vert f^{(n)}(x)\right\vert ^{q}\right\} \right] ^{\frac{1}{q}%
}\right.  \\
&&\left. +\left( \left( b-x\right) ^{\left( nq+q-p-1\right) /(q-1)}\right) ^{%
\frac{1}{p}}\left( \frac{\left( b-x\right) ^{p+1}}{p+1}\right) ^{\frac{1}{q}}%
\left[ \max \left\{ \left\vert f^{(n)}(x)\right\vert ^{q},\left\vert
f^{(n)}(b)\right\vert ^{q}\right\} \right] ^{\frac{1}{q}}\right\} 
\end{eqnarray*}%
where $p>1$ and $\frac{1}{p}+\frac{1}{q}=1.$
\end{theorem}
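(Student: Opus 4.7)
The plan is to begin, exactly as in Theorems \ref{teo 2.1} and \ref{teo 2.2}, from Lemma \ref{lem 1.1} combined with the triangle inequality, which gives
\begin{equation*}
\left\vert \int_{a}^{b}f(t)dt-\sum_{k=0}^{n-1}\frac{1}{(k+1)!}\left[ (x-a)^{k+1}f^{(k)}(a)+(-1)^{k}(b-x)^{k+1}f^{(k)}(b)\right] \right\vert \leq \frac{1}{n!}\int_{a}^{b}|x-t|^{n}|f^{(n)}(t)|dt.
\end{equation*}
I would then split the right-hand integral over $[a,x]$ and $[x,b]$ to drop the absolute value inside $|x-t|^{n}$, reducing the task to bounding $\int_{a}^{x}(x-t)^{n}|f^{(n)}(t)|dt$ and $\int_{x}^{b}(t-x)^{n}|f^{(n)}(t)|dt$ separately.

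The step that distinguishes this result from Theorem \ref{teo 2.2} is the choice of a \emph{weighted} H\"{o}lder split. For the first integral I would write
\begin{equation*}
(x-t)^{n}|f^{(n)}(t)|=(x-t)^{n-p/q}\cdot (x-t)^{p/q}|f^{(n)}(t)|
\end{equation*}
and apply H\"{o}lder's inequality with exponents $p$ and $q$ to obtain
\begin{equation*}
\int_{a}^{x}(x-t)^{n}|f^{(n)}(t)|dt\leq \left(\int_{a}^{x}(x-t)^{(n-p/q)p}dt\right)^{1/p}\left(\int_{a}^{x}(x-t)^{p}|f^{(n)}(t)|^{q}dt\right)^{1/q}.
\end{equation*}
The first factor is an elementary power integral equal to $\bigl((x-a)^{(n-p/q)p+1}/((n-p/q)p+1)\bigr)^{1/p}$. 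Using the conjugacy relations $1/p+1/q=1$, equivalently $p/q=p-1$ and $q-1=1/(p-1)$, a short algebraic manipulation converts $(n-p/q)p+1$ into the disguised form $(nq+q-p-1)/(q-1)$ that appears in the statement, and the constant $1/((n-p/q)p+1)$ into $(q-1)/(nq+q-p-1)$. This rearrangement is the only bookkeeping step I anticipate.

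For the second factor I would invoke the quasi-convexity of $|f^{(n)}|^{q}$: on $[a,x]$ every $t$ can be written as $t=\frac{t-a}{x-a}x+\frac{x-t}{x-a}a$, so $|f^{(n)}(t)|^{q}\leq \max\{|f^{(n)}(a)|^{q},|f^{(n)}(x)|^{q}\}$, giving
\begin{equation*}
\int_{a}^{x}(x-t)^{p}|f^{(n)}(t)|^{q}dt\leq \max\{|f^{(n)}(a)|^{q},|f^{(n)}(x)|^{q}\}\cdot \frac{(x-a)^{p+1}}{p+1}.
\end{equation*}
Raising to the $1/q$ power and multiplying by the first H\"{o}lder factor reproduces the first summand of the claimed bound. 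An entirely analogous argument on $[x,b]$, based on $t=\frac{t-x}{b-x}b+\frac{b-t}{b-x}x$ and the resulting bound $|f^{(n)}(t)|^{q}\leq \max\{|f^{(n)}(x)|^{q},|f^{(n)}(b)|^{q}\}$, yields the second summand. The main obstacle is really just identifying the correct H\"{o}lder split $n=(n-p/q)+p/q$; once that is in place the rest of the proof is the same template as Theorems \ref{teo 2.1} and \ref{teo 2.2}, plus the routine simplification of the exponent.
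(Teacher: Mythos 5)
Your proposal follows essentially the same route as the paper's own proof: split the integral at $x$, apply H\"{o}lder with exponents $p,q$ to the factorization $(x-t)^{n}|f^{(n)}(t)|=(x-t)^{n-p/q}\cdot (x-t)^{p/q}|f^{(n)}(t)|$ (your exponent $(n-p/q)p$ is exactly the paper's $(nq-p)/(q-1)$ after using $p(q-1)=q$), and then bound $|f^{(n)}(t)|^{q}$ by the appropriate maximum via quasi-convexity. The bookkeeping you describe matches the paper's displayed antiderivative computations, so the argument is correct and not genuinely different.
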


\begin{proof}
From Lemma \ref{lem 1.1}, property of the modulus, well-known H\"{o}lder
integral inequality and $quasi-$convexity of  $\left\vert f^{(n)}\right\vert
^{q}$ we can write%
\begin{eqnarray*}
&&\left\vert \int_{a}^{b}f(t)dt-\sum_{k=0}^{n-1}\frac{1}{\left( k+1\right) !}%
\left[ \left( x-a\right) ^{k+1}f^{(k)}(a)+\left( -1\right) ^{k}\left(
b-x\right) ^{k+1}f^{(k)}(b)\right] \right\vert  \\
&\leq &\frac{1}{n!}\int_{a}^{b}\left\vert x-t\right\vert ^{n}\left\vert
f^{(n)}(t)\right\vert dt \\
&=&\frac{1}{n!}\left\{ \int_{a}^{x}\left( x-t\right) ^{n}\left\vert
f^{(n)}(t)\right\vert dt+\int_{x}^{b}\left( t-x\right) ^{n}\left\vert
f^{(n)}(t)\right\vert dt\right\}  \\
&\leq &\frac{1}{n!}\left\{ \left( \int_{a}^{x}\left( x-t\right)
^{(nq-p)/(q-1)}dt\right) ^{\frac{1}{p}}\left( \int_{a}^{x}\left( x-t\right)
^{p}\left\vert f^{(n)}(t)\right\vert ^{q}dt\right) ^{\frac{1}{q}}\right.  \\
&&\left. +\left( \int_{x}^{b}\left( t-x\right) ^{(nq-p)/(q-1)}dt\right) ^{%
\frac{1}{p}}\left( \int_{x}^{b}\left( t-x\right) ^{p}\left\vert
f^{(n)}(t)\right\vert ^{q}dt\right) ^{\frac{1}{q}}\right\}  \\
&=&\frac{1}{n!}\left\{ \left( \int_{a}^{x}\left( x-t\right)
^{(nq-p)/(q-1)}dt\right) ^{\frac{1}{p}}\left( \int_{a}^{x}\left( x-t\right)
^{p}\left\vert f^{(n)}\left( \frac{t-a}{x-a}x+\frac{x-t}{x-a}a\right)
\right\vert ^{q}dt\right) ^{\frac{1}{q}}\right.  \\
&&\left. +\left( \int_{x}^{b}\left( t-x\right) ^{(nq-p)/(q-1)}dt\right) ^{%
\frac{1}{p}}\left( \int_{x}^{b}\left( t-x\right) ^{p}\left\vert
f^{(n)}\left( \frac{t-x}{b-x}b+\frac{b-t}{b-x}x\right) \right\vert
^{q}dt\right) ^{\frac{1}{q}}\right\}  \\
&\leq &\frac{1}{n!}\left\{ \left( \int_{a}^{x}\left( x-t\right)
^{(nq-p)/(q-1)}dt\right) ^{\frac{1}{p}}\left( \int_{a}^{x}\left( x-t\right)
^{p}\left[ \max \left\{ \left\vert f^{(n)}(a)\right\vert ^{q},\left\vert
f^{(n)}(x)\right\vert ^{q}\right\} \right] dt\right) ^{\frac{1}{q}}\right. 
\\
&&\left. +\left( \int_{x}^{b}\left( t-x\right) ^{(nq-p)/(q-1)}dt\right) ^{%
\frac{1}{p}}\left( \int_{x}^{b}\left( t-x\right) ^{p}\left[ \max \left\{
\left\vert f^{(n)}(x)\right\vert ^{q},\left\vert f^{(n)}(b)\right\vert
^{q}\right\} \right] dt\right) ^{\frac{1}{q}}\right\} .
\end{eqnarray*}%
If we use the fact that 
\begin{equation*}
\int_{a}^{x}\left( x-t\right) ^{(nq-p)/(q-1)}dt=\frac{q-1}{nq-p+q-1}\left(
x-a\right) ^{\left( nq+q-p-1\right) /(q-1)},
\end{equation*}%
\begin{equation*}
\int_{x}^{b}\left( t-x\right) ^{(nq-p)/(q-1)}dt=\frac{q-1}{nq-p+q-1}\left(
b-x\right) ^{\left( nq+q-p-1\right) /(q-1)},
\end{equation*}%
\begin{equation*}
\int_{a}^{x}\left( x-t\right) ^{p}dt=\frac{\left( x-a\right) ^{p+1}}{p+1}
\end{equation*}%
and%
\begin{equation*}
\int_{x}^{b}\left( t-x\right) ^{p}dt=\frac{\left( b-x\right) ^{p+1}}{p+1}
\end{equation*}%
in belove we complete the proof.
\end{proof}

\section{inequalities for $n-$times differentiable $quasi-$convex functions
via Lemma \protect\ref{lem 1.2}}

\begin{theorem}
\label{teo 3.1} Let $f:[a,b]\rightarrow 
\mathbb{R}
$ be an $n-$ times differentiable function $.$ If $\left\vert
f^{(n)}\right\vert ^{q}$ is $quasi-$convex on $[a,b]$ following inequality
holds\ 
\begin{eqnarray*}
&&\left\vert \frac{f(a)+f(b)}{2}-\frac{1}{b-a}\int_{a}^{b}f(x)dx-%
\sum_{k=2}^{n-1}\frac{\left( k-1\right) \left( b-a\right) ^{k}}{2(k+1)!}%
f^{(k)}(a)\right\vert  \\
&=&\frac{\left( b-a\right) ^{n}\left( n-1\right) }{2(n+1)!}\left[ \max
\left\{ \left\vert f^{(n)}(a)\right\vert ^{q},\left\vert
f^{(n)}(b)\right\vert ^{q}\right\} \right] ^{\frac{1}{q}}
\end{eqnarray*}%
for $q\geq 1$ and $\ n\geq 2.$

\begin{proof}
From Lemma \ref{lem 1.2}, property of the modulus, well-known power-mean
integral inequality and $quasi-$convexity of $\left\vert f^{(n)}\right\vert
^{q}$ we can write%
\begin{eqnarray*}
&&\left\vert \frac{f(a)+f(b)}{2}-\frac{1}{b-a}\int_{a}^{b}f(x)dx-%
\sum_{k=2}^{n-1}\frac{\left( k-1\right) \left( b-a\right) ^{k}}{2(k+1)!}%
f^{(k)}(a)\right\vert  \\
&\leq &\frac{\left( b-a\right) ^{n}}{2n!}\int_{0}^{1}t^{n-1}(n-2t)\left\vert
f^{(n)}(ta+\left( 1-t\right) b)\right\vert dt \\
&\leq &\frac{\left( b-a\right) ^{n}}{2n!}\left(
\int_{0}^{1}t^{n-1}(n-2t)dt\right) ^{1-\frac{1}{q}}\left(
\int_{0}^{1}t^{n-1}(n-2t)\left\vert f^{(n)}(ta+\left( 1-t\right)
b)\right\vert ^{q}dt\right) ^{\frac{1}{q}} \\
&\leq &\frac{\left( b-a\right) ^{n}}{2n!}\left( \frac{n-1}{n+1}\right) ^{1-%
\frac{1}{q}}\left( \int_{0}^{1}t^{n-1}(n-2t)\left[ \max \left\{ \left\vert
f^{(n)}(a)\right\vert ^{q},\left\vert f^{(n)}(b)\right\vert ^{q}\right\} %
\right] dt\right) ^{\frac{1}{q}} \\
&=&\frac{\left( b-a\right) ^{n}(n-1)}{2(n+1)!}\left[ \max \left\{ \left\vert
f^{(n)}(a)\right\vert ^{q},\left\vert f^{(n)}(b)\right\vert ^{q}\right\} %
\right] ^{\frac{1}{q}}
\end{eqnarray*}%
which completes the proof.
\end{proof}
\end{theorem}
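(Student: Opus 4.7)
The plan is to follow the template already used in Section 2: apply Lemma \ref{lem 1.2} to express the left-hand side as an integral over $[0,1]$, pass the absolute value inside, and then dispatch the resulting integral with the power-mean inequality and quasi-convexity of $\left\vert f^{(n)}\right\vert^{q}$.

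First, from Lemma \ref{lem 1.2} and the triangle inequality, the quantity to be bounded is at most $\frac{(b-a)^{n}}{2n!}\int_{0}^{1}t^{n-1}|n-2t|\,|f^{(n)}(ta+(1-t)b)|\,dt$. A small but essential observation here is that $n\geq 2$ forces $n-2t\geq 0$ on $[0,1]$, so the weight $w(t):=t^{n-1}(n-2t)$ is non-negative and the absolute value around it can be dropped. I would then apply the power-mean inequality in the form $\int_{0}^{1}w\,g\,dt \leq \bigl(\int_{0}^{1}w\,dt\bigr)^{1-1/q}\bigl(\int_{0}^{1}w\,g^{q}\,dt\bigr)^{1/q}$ with $g(t)=|f^{(n)}(ta+(1-t)b)|$, which is legitimate because $q\geq 1$.

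The arithmetic is short. Splitting $\int_{0}^{1}t^{n-1}(n-2t)\,dt = n\int_{0}^{1}t^{n-1}\,dt - 2\int_{0}^{1}t^{n}\,dt$ gives the total mass $\frac{n-1}{n+1}$. By quasi-convexity, $|f^{(n)}(ta+(1-t)b)|^{q} \leq \max\{|f^{(n)}(a)|^{q},|f^{(n)}(b)|^{q}\}$ for every $t\in[0,1]$, so the weighted $L^{q}$ integral is bounded by $\frac{n-1}{n+1}\max\{|f^{(n)}(a)|^{q},|f^{(n)}(b)|^{q}\}$ as well. The two exponents $1-\frac{1}{q}$ and $\frac{1}{q}$ then recombine the two copies of $\frac{n-1}{n+1}$ into a single factor, and absorbing it against the prefactor $\frac{(b-a)^{n}}{2n!}$ via $(n+1)\cdot n! = (n+1)!$ yields exactly the claimed bound.

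No step looks genuinely hard: the only potential pitfalls are bookkeeping, namely keeping $n-2t$ non-negative (so the absolute value can be dropped without cases) and making sure the two occurrences of $\frac{n-1}{n+1}$ merge correctly under the exponents of the power-mean inequality. Incidentally, the $=$ sign in the displayed conclusion of the theorem is evidently a misprint for $\leq$, since the argument above can only produce an inequality.
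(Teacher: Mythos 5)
Your proposal is correct and follows essentially the same route as the paper's own proof: Lemma \ref{lem 1.2}, the power-mean inequality with weight $t^{n-1}(n-2t)$, quasi-convexity of $\left\vert f^{(n)}\right\vert^{q}$, and the evaluation $\int_{0}^{1}t^{n-1}(n-2t)\,dt=\frac{n-1}{n+1}$. Your two side remarks are both sound and actually sharpen the paper's exposition: the hypothesis $n\geq 2$ is indeed what makes the weight non-negative (a point the paper leaves implicit), and the displayed ``$=$'' in the theorem statement is indeed a misprint for ``$\leq$''.
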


\begin{corollary}
\label{co 3.1} In Theorem \ref{teo 3.1}, if we choose $q=1$ we obtain%
\begin{eqnarray*}
&&\left\vert \frac{f(a)+f(b)}{2}-\frac{1}{b-a}\int_{a}^{b}f(x)dx-%
\sum_{k=2}^{n-1}\frac{\left( k-1\right) \left( b-a\right) ^{k}}{2(k+1)!}%
f^{(k)}(a)\right\vert  \\
&=&\frac{\left( b-a\right) ^{n}\left( n-1\right) }{2(n+1)!}\left[ \max
\left\{ \left\vert f^{(n)}(a)\right\vert ,\left\vert f^{(n)}(b)\right\vert
\right\} \right] .
\end{eqnarray*}
\end{corollary}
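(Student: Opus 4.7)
The plan is to derive Corollary~\ref{co 3.1} as the specialization $q=1$ of Theorem~\ref{teo 3.1}, which has just been established. Substituting $q=1$ into the right-hand side of Theorem~\ref{teo 3.1} collapses the exponent $1/q$ to $1$, so $\left[\max\{|f^{(n)}(a)|^q,|f^{(n)}(b)|^q\}\right]^{1/q}$ becomes $\max\{|f^{(n)}(a)|,|f^{(n)}(b)|\}$, while the prefactor $\frac{(b-a)^n(n-1)}{2(n+1)!}$ is unchanged. The hypothesis ``$|f^{(n)}|^q$ is quasi-convex'' at $q=1$ reduces to quasi-convexity of $|f^{(n)}|$, which is precisely what Theorem~\ref{teo 3.1} requires at that exponent, so no additional verification is needed.

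If one prefers a self-contained derivation that bypasses the power-mean step altogether, the corollary also follows in essentially one line directly from Lemma~\ref{lem 1.2}. After taking absolute values in the identity of that lemma, non-negativity of $t^{n-1}(n-2t)$ on $[0,1]$ (which is where the standing hypothesis $n\geq 2$ is used) allows one to apply quasi-convexity of $|f^{(n)}|$ pointwise and pull $\max\{|f^{(n)}(a)|,|f^{(n)}(b)|\}$ outside the integral. What remains is the elementary evaluation
\[
\int_{0}^{1} t^{n-1}(n-2t)\,dt \;=\; n\cdot\frac{1}{n}-2\cdot\frac{1}{n+1} \;=\; \frac{n-1}{n+1},
\]
and combining this with the prefactor $\frac{(b-a)^{n}}{2n!}$ via $\frac{1}{n!\,(n+1)}=\frac{1}{(n+1)!}$ yields the claimed bound.

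The argument presents no real obstacle; it is a routine substitution into the already-proved theorem. The only point worth flagging is that the ``$=$'' appearing in the displayed conclusion of Corollary~\ref{co 3.1} should be read as ``$\leq$'' to be consistent with Theorem~\ref{teo 3.1}, since the construction always produces an upper estimate rather than an identity.
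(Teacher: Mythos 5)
Your proposal is correct and matches the paper's (implicit) argument: the corollary is simply Theorem~\ref{teo 3.1} with $q=1$, and your direct check via Lemma~\ref{lem 1.2} with $\int_{0}^{1}t^{n-1}(n-2t)\,dt=\frac{n-1}{n+1}$ confirms the same bound. You are also right that the displayed ``$=$'' is a typo for ``$\leq$'', inherited from the same slip in the statement of Theorem~\ref{teo 3.1}.
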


\begin{theorem}
\label{teo 3.2} Let $f:[a,b]\rightarrow 
\mathbb{R}
$ be an $n-$ times differentiable function $.$ If $\left\vert
f^{(n)}\right\vert ^{q}$ is $quasi-$convex on $[a,b]$ following inequality
holds%
\begin{eqnarray*}
&&\left\vert \frac{f(a)+f(b)}{2}-\frac{1}{b-a}\int_{a}^{b}f(x)dx-%
\sum_{k=2}^{n-1}\frac{\left( k-1\right) \left( b-a\right) ^{k}}{2(k+1)!}%
f^{(k)}(a)\right\vert  \\
&\leq &\frac{\left( b-a\right) ^{n}}{2^{1+\frac{1}{q}}n!}\left( \frac{1}{%
np-p+1}\right) ^{\frac{1}{p}}\left( \frac{n^{q+1}-\left( n-2\right) ^{q+1}}{%
q+1}\right) ^{\frac{1}{q}} \\
&&\times \left[ \max \left\{ \left\vert f^{(n)}(a)\right\vert
^{q},\left\vert f^{(n)}(b)\right\vert ^{q}\right\} \right] ^{\frac{1}{q}}
\end{eqnarray*}%
for $n\geq 2,$ $q>1$ and $\frac{1}{p}+\frac{1}{q}=1.$
\end{theorem}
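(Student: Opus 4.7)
The plan is to follow the same template as Theorem 3.1, but replacing the power-mean inequality with H\"{o}lder's inequality, which should produce exactly the two integrals whose values appear in the stated bound.

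First, I would apply Lemma \ref{lem 1.2} and take absolute values, obtaining
\begin{equation*}
\left|\frac{f(a)+f(b)}{2}-\frac{1}{b-a}\int_a^b f(x)\,dx-\sum_{k=2}^{n-1}\frac{(k-1)(b-a)^k}{2(k+1)!}f^{(k)}(a)\right|\leq \frac{(b-a)^n}{2\,n!}\int_0^1 t^{n-1}(n-2t)\bigl|f^{(n)}(ta+(1-t)b)\bigr|\,dt.
\end{equation*}
Here I would note that for $t\in[0,1]$ and $n\geq 2$ one has $n-2t\geq n-2\geq 0$, so the factor $t^{n-1}(n-2t)$ is nonnegative and no absolute value is needed on it.

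Next I would split the weight as $t^{n-1}(n-2t)=t^{n-1}\cdot(n-2t)$ and apply H\"{o}lder's inequality with conjugate exponents $p,q$ to the pairing
\begin{equation*}
\int_0^1 t^{n-1}\cdot\bigl[(n-2t)\bigl|f^{(n)}(ta+(1-t)b)\bigr|\bigr]\,dt\leq\left(\int_0^1 t^{(n-1)p}\,dt\right)^{1/p}\left(\int_0^1 (n-2t)^q\bigl|f^{(n)}(ta+(1-t)b)\bigr|^q\,dt\right)^{1/q}.
\end{equation*}
The first factor integrates to $\bigl(np-p+1\bigr)^{-1}$, giving the $\left(\frac{1}{np-p+1}\right)^{1/p}$ in the claim.

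Then I would invoke quasi-convexity of $\bigl|f^{(n)}\bigr|^q$ to pull the maximum out of the second factor:
\begin{equation*}
\int_0^1 (n-2t)^q\bigl|f^{(n)}(ta+(1-t)b)\bigr|^q\,dt\leq \max\bigl\{|f^{(n)}(a)|^q,|f^{(n)}(b)|^q\bigr\}\int_0^1(n-2t)^q\,dt,
\end{equation*}
and evaluate the remaining integral by the substitution $u=n-2t$, which yields $\int_0^1(n-2t)^q\,dt=\frac{n^{q+1}-(n-2)^{q+1}}{2(q+1)}$. Combining these pieces produces the prefactor $\frac{(b-a)^n}{2\,n!}\cdot\bigl(\tfrac{1}{2}\bigr)^{1/q}=\frac{(b-a)^n}{2^{1+1/q}\,n!}$ and matches the statement exactly.

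The only subtle point is choosing the right H\"{o}lder split: putting $t^{n-1}$ in the $p$-factor and $n-2t$ in the $q$-factor is what makes the quasi-convexity bound absorb cleanly into the $(n-2t)^q$ integral, and it is also what reproduces the exponent $np-p+1=(n-1)p+1$. A different split (for instance grouping both weights into one factor) would yield a messier expression that does not match the displayed bound. Apart from this bookkeeping, every step is routine.
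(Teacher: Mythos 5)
Your proposal is correct and follows essentially the same route as the paper's own proof: the identical H\"{o}lder split placing $t^{(n-1)p}$ in the $p$-factor and $(n-2t)^{q}\left\vert f^{(n)}\right\vert ^{q}$ in the $q$-factor, followed by the quasi-convexity bound and the evaluation $\int_{0}^{1}(n-2t)^{q}dt=\frac{n^{q+1}-(n-2)^{q+1}}{2(q+1)}$. Your explicit remark that $n-2t\geq n-2\geq 0$ for $n\geq 2$ is a small but worthwhile addition the paper leaves implicit.
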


\begin{proof}
From Lemma \ref{lem 1.2}, property of the modulus, well-known H\"{o}lder
integral inequality and $quasi-$convexity of $\left\vert f^{(n)}\right\vert
^{q}$ we can write%
\begin{eqnarray*}
&&\left\vert \frac{f(a)+f(b)}{2}-\frac{1}{b-a}\int_{a}^{b}f(x)dx-%
\sum_{k=2}^{n-1}\frac{\left( k-1\right) \left( b-a\right) ^{k}}{2(k+1)!}%
f^{(k)}(a)\right\vert  \\
&\leq &\frac{\left( b-a\right) ^{n}}{2n!}\int_{0}^{1}t^{n-1}(n-2t)\left\vert
f^{(n)}(ta+\left( 1-t\right) b)\right\vert dt \\
&\leq &\frac{\left( b-a\right) ^{n}}{2n!}\left(
\int_{0}^{1}t^{(n-1)p}dt\right) ^{\frac{1}{p}}\left(
\int_{0}^{1}(n-2t)^{q}\left\vert f^{(n)}(ta+\left( 1-t\right) b)\right\vert
^{q}dt\right) ^{\frac{1}{q}} \\
&\leq &\frac{\left( b-a\right) ^{n}}{2n!}\left( \frac{1}{np-p+1}\right) ^{%
\frac{1}{p}}\left( \int_{0}^{1}(n-2t)^{q}\left[ \max \left\{ \left\vert
f^{(n)}(a)\right\vert ^{q},\left\vert f^{(n)}(b)\right\vert ^{q}\right\} %
\right] dt\right) ^{\frac{1}{q}} \\
&=&\frac{\left( b-a\right) ^{n}}{2^{1+\frac{1}{q}}n!}\left( \frac{1}{np-p+1}%
\right) ^{\frac{1}{p}}\left( \frac{n^{q+1}-\left( n-2\right) ^{q+1}}{q+1}%
\right) ^{\frac{1}{q}} \\
&&\times \left[ \max \left\{ \left\vert f^{(n)}(a)\right\vert
^{q},\left\vert f^{(n)}(b)\right\vert ^{q}\right\} \right] ^{\frac{1}{q}}.
\end{eqnarray*}%
The proof is completed.
\end{proof}


\begin{thebibliography}{99}
\bibitem{1} H. Kavurmaci, M. Avci and M. E. \"{O}zdemir, New inequalities of
Hermite-Hadamard type for convex functions with applications, Journal of
Inequalities and Applications, 2011, 2011:86.

\bibitem{2} M. \"{O}zdemir, H. Kavurmaci, A. Akdemir and M. Avci,
Inequalities for convex and $s-$convex functions on $\Delta =[a,b]\times
\lbrack c,d]$, Journal of Inequalities and Applications, 2012, 2012:20.

\bibitem{4} B.-Y. Xi and F. Qi, Some Hermite-Hadamard type inequalities for
differentiable convex functions and applications, Hacettepe Journal of
Mathematics and Statistics, Vol. 42 (3) (2013), 243-257.

\bibitem{5} B.-Y. Xi and F. Qi, Hermite-Hadamard type inequalities for
functions whose derivatives are of convexities, Nonlinear Functional
Analysis and Applications, Vol. 18, No. 2 (2013), pp. 163-176.

\bibitem{6} S.-P. Bai, S.-H. Wang and F. Qi, Some Hermite-Hadamard type
inequalities for $n-$time differentiable $\left( \alpha ,m\right) -$convex
functions, Journal of Inequalities and Applications 2012, 2012:267.

\bibitem{7} B. G. Pachpatte, New inequalities of Ostrowski and Trapezoid
type for $n-$time differentiable functions, Bull. Korean Math. Soc. 41
(2004), No. 4, pp. 633-639.

\bibitem{8} A. Sofo, Integral inequalities for $n-$ times differentiable
mappings, with multiple branches, on the $L_{p}$ norm, Soochow Journal of
Mathematics, Volume 28, No. 2, pp. 179-221, 2002.

\bibitem{9} A. I. Kechriniotis and Y. A. Theodorou, New integral
inequalities for $n-$time differentiable functions with applications for
pdfs, Applied Mathematical Sciences, Vol. 2, 2008, no. 8, 353 - 362.

\bibitem{10} W.-D. Jiang, D.-W. Niu, Y. Hua, and F. Qi, Generalizations of
Hermite-Hadamard inequality to $n$-time differentiable functions which are $%
s-$convex in the second sense, Analysis (Munich) 32 (2012), no. 3, 209--220.

\bibitem{10.} D.-Y. Hwang, Some inequalities for $n-$time differentiable
mappings and applications, Kyungpook Math. J. 43(2003), 335-343.

\bibitem{11} S.-H. Wang, B.-Y. Xi and F. Qi, Some new inequalities of
Hermite-Hadamard type for $n-$time differentiable functions which are $m-$%
convex, Analysis 32, 247-262 (2012).

\bibitem{12} Cerone, P, Dragomir, SS, Roumelotis, J: Some Ostrowski type
inequalities for $n-$time differentiable mappings and applications.
Demonstr. Math. 32(4), 697-712 (1999).

\bibitem{16} M. Alomari, M. Darus and U. S. Kirmaci, Refinements of
Hadamard-type inequalities for $quasi-$convex functions with applications to
trapezoidal formula and to special means, Computers and Mathematics with
Applications, 59 (2010) 225 232.

\bibitem{17} \c{C}. Yildiz, A. O. Akdemir and M. Avci, Some inequalities of
Hermite-Hadamard type for functions whose derivatives absolute values are $%
quasi-$convex, Erzincan \"{U}niversitesi Fen Bilimleri Dergisi, 2010 (3),
No: 2, 263-271.

\bibitem{18} S. Hussain and S. Qaisar, New Integral Inequalities of the Type
of Hermite-Hadamard Through $Quasi$ Convexity, Punjab University Journal of
Mathematics, 45 (2013). 

\bibitem{19} M. E. \"{O}zdemir, \c{C}. Yildiz and A. O. Akdemir, On some new
Hadamard-type inequalities for co-ordinated $quasi-$convex functions,
Hacettepe Journal of Mathematics and Statistics, Volume 41 (5) (2012), 697
-- 707.

\bibitem{20} S. S. Dragomir and C. E. M. Pearce, Jensen's inequality for $%
quasi-$convex functions, Numerical Algebra, Control and Optimization,
2012(2), No. 2, pp. 279-291. 

\bibitem{21} N. S. Barnett and S. S. Dragomir, Applications of Ostrowski's
version of the Gr\"{u}ss inequality for trapezoid type rules, RGMIA Res.
Rep. Coll., 2002, Vol.5, Iss. 4.
\end{thebibliography}
\end{document}